\definecolor{webgreen}{rgb}{0,.5,0}
\definecolor{webbrown}{rgb}{.6,0,0}
\newtheorem{theorem}{Theorem}
\newtheorem{lemma}[theorem]{Lemma}
\newtheorem{proposition}[theorem]{Proposition}
\newenvironment{proof}[1][Proof]{\noindent\textbf{#1.} }{\ \rule{0.5em}{0.5em}}
\begin{document}

\begin{center}
\vskip1cm

{\LARGE \textbf{A sharpening of a problem on Bernstein polynomials and
convex functions}}

\vspace{2cm}

{\large Ulrich Abel}\\[3mm]
\textit{Fachbereich MND}\\[0pt]
\textit{Technische Hochschule Mittelhessen}\\[0pt]
\textit{Wilhelm-Leuschner-Stra\ss e 13, 61169 Friedberg, }\\[0pt]
\textit{Germany}\\[0pt]
\href{mailto:Ulrich.Abel@mnd.thm.de}{\texttt{Ulrich.Abel@mnd.thm.de}}\\[1cm]
{\large Ioan Ra\c{s}a}\\[3mm]
\textit{Department of Mathematics}\\[0pt]
\textit{Technical University of Cluj-Napoca,}\\[0pt]
\textit{RO-400114 Cluj-Napoca,}\\[0pt]
\textit{Romania}\\[0pt]
\href{mailto:Ioan.Rasa@math.utcluj.ro}{\texttt{Ioan.Rasa@math.utcluj.ro}}
\end{center}

\vspace{2cm}

{\large \textbf{Abstract.}}

\bigskip

We present an elementary proof of a conjecture proposed by I. Ra\c{s}a \cite%
{Rasa-2017-workshop} in 2017 which is an inequality involving Bernstein
basis polynomials and convex functions. It was affirmed in positive by A.
Komisarski and T. Rajba \cite{Komisarski-letter} very recently by the use of
stochastic convex orderings.

\bigskip

\textit{Mathematics Subject Classification (2010): } 26D05, 
39B62. 

\emph{Keywords:} Inequalities for polynomials, Functional inequalities
including convexity.

\vspace{2cm}


\section{Introduction}

\label{intro} 
The classical Bernstein polynomials, defined for $f\in C\left[ 0,1\right] $
by 
\begin{equation*}
\left( B_{n}f\right) \left( x\right) =\sum\limits_{\nu =0}^{n}p_{n,\nu
}\left( x\right) {f}\left( \frac{\nu }{n}\right) \text{ \qquad }\left( x\in %
\left[ 0,1\right] \right) ,
\end{equation*}%
with the basis polynomials 
\begin{equation*}
p_{n,\nu }\left( x\right) =\binom{n}{\nu }x^{\nu }\left( 1-x\right) ^{n-\nu }%
\text{ \qquad }\left( \nu =0,1,2,\ldots \right) ,
\end{equation*}%
are the most prominent positive linear approximation operators (see \cite%
{Lorentz-book-1953}). If $f\in C\left[ 0,1\right] $ is convex the inequality 
\begin{equation}
\sum\limits_{i=0}^{n}\sum\limits_{j=0}^{n}\left[ p_{n,i}\left( x\right)
p_{n,j}\left( x\right) +p_{n,i}\left( y\right) p_{n,j}\left( y\right)
-2p_{n,i}\left( x\right) p_{n,j}\left( y\right) \right] {f}\left( \frac{i+j}{%
2n}\right) \geq 0  \label{conjecture-Bernstein}
\end{equation}%
is valid, for $x,y\in \left[ 0,1\right] $.

This inequality involving Bernstein basis polynomials and convex functions
was stated as an open problem 25 years ago by Ioan Ra\c{s}a. During the
Conference on Ulam's Type Stability (Rytro, Poland, 2014), Ra\c{s}a \cite%
{Rasa-2014} recalled his problem.

Inequalities of type $\left( \ref{conjecture-Bernstein}\right) $ have
important applications. They are useful when studying whether the
Bernstein-Schnabl operators preserve convexity (see \cite%
{Altomare-ea-book-2014, altomare-ea-elliptic-,Altomare-ea-Kantorovich-}).

Very recently, J. Mrowiec, T. Rajba and S. W\k{a}sowicz \cite%
{Mrowiec-ea-2017}\ affirmed the conjecture in positive. Their proof makes
heavy use of probability theory. As a tool they applied stochastic convex
orderings (which they proved for binomial distributions) as well as the
so-called concentration inequality. After that one of the authors gave a
short elementary proof \cite{Abel-JAT-2017} of inequality $\left( \ref%
{conjecture-Bernstein}\right) $. The other author remarked in \cite%
{Rasa-2017-workshop} that $\left( \ref{conjecture-Bernstein}\right) $ is
equivalent to 
\begin{equation}
\left( B_{2n}f\right) \left( x\right) +\left( B_{2n}f\right) \left( y\right)
\geq 2\sum\limits_{i=0}^{n}\sum\limits_{j=0}^{n}p_{n,i}\left( x\right)
p_{n,j}\left( y\right) {f}\left( \frac{i+j}{2n}\right) .
\label{conjecture-equivalent}
\end{equation}%
Since $B_{2n}f$ is convex, we have 
\begin{equation}
\left( B_{2n}f\right) \left( x\right) +\left( B_{2n}f\right) \left( y\right)
\geq 2\left( B_{2n}f\right) \left( \frac{x+y}{2}\right) .  \label{Bn-convex}
\end{equation}%
Thus the following problem seems to be a natural one: Prove that 
\begin{equation}
\left( B_{2n}f\right) \left( \frac{x+y}{2}\right) \geq
\sum\limits_{i=0}^{n}\sum\limits_{j=0}^{n}p_{n,i}\left( x\right)
p_{n,j}\left( y\right) {f}\left( \frac{i+j}{2n}\right) ,
\label{new-conjecture}
\end{equation}%
for all convex $f\in C\left[ 0,1\right] $ and $x,y\in \left[ 0,1\right] $.

If $\left( \ref{new-conjecture}\right) $ is valid, then $\left( \ref%
{conjecture-equivalent}\right) $ -- and hence $\left( \ref%
{conjecture-Bernstein}\right) $ -- is a consequence of $\left( \ref%
{Bn-convex}\right) $ and $\left( \ref{new-conjecture}\right) $. Starting
from these remarks, the second author presented the inequality $\left( \ref%
{new-conjecture}\right) $ as an open problem in \cite{Rasa-2017-workshop}. A
probabilistic solution was found by A. Komisarski and T. Rajba \cite%
{Komisarski-letter} using the methods developed in \cite{Mrowiec-ea-2017}
and \cite{Komisarski-2017}.

The purpose of this short note is to give an analytic proof of the following
theorem.

\begin{theorem}
\label{theorem-new-conj-bernstein}Let $n\in \mathbb{N}$. If $f\in C\left[ 0,1%
\right] $ is a convex function, then inequality $\left( \ref{new-conjecture}%
\right) $ is valid for all $x,y\in \left[ 0,1\right] $.
\end{theorem}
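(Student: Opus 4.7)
The plan is to prove the stronger statement
\[
U_{m}(t) \leq V_{m}(t) \quad \text{for every } m \in \{0,1,\ldots,n\} \text{ and every integer } t \in [0, 2n-2m],
\]
where
\[
U_{m}(t) := \sum_{i=0}^{m}\sum_{j=0}^{m} p_{m,i}(x)\,p_{m,j}(y)\, f\!\left(\frac{t+i+j}{2n}\right), \qquad
V_{m}(t) := \sum_{k=0}^{2m} p_{2m,k}\!\left(\frac{x+y}{2}\right) f\!\left(\frac{t+k}{2n}\right),
\]
by induction on $m$. Inequality $\left( \ref{new-conjecture}\right) $ is the case $m=n$, $t=0$, and the base case $m=0$ is immediate since $U_{0}(t)=V_{0}(t)=f(t/(2n))$. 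The heuristic motivating this setup is probabilistic: with $X\sim\mathrm{Bin}(n,x)$, $Y\sim\mathrm{Bin}(n,y)$ independent and $S\sim\mathrm{Bin}(2n,(x+y)/2)$, the target is the convex ordering $X+Y\leq_{\mathrm{cx}}S$, and the induction peels off one independent trial pair at each step.

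For the inductive step I would apply the Bernstein basis recursion $p_{m+1,i}(z)=(1-z)\,p_{m,i}(z)+z\,p_{m,i-1}(z)$ (twice, for $V$) to obtain, with $p:=(x+y)/2$,
\begin{align*}
U_{m+1}(t) &= (1-x)(1-y)\,U_{m}(t) + (x+y-2xy)\,U_{m}(t+1) + xy\,U_{m}(t+2),\\
V_{m+1}(t) &= (1-p)^{2}\,V_{m}(t) + 2p(1-p)\,V_{m}(t+1) + p^{2}\,V_{m}(t+2).
\end{align*}
Setting $(\beta_{0},\beta_{1},\beta_{2}):=((1-x)(1-y),x+y-2xy,xy)$ and $(\alpha_{0},\alpha_{1},\alpha_{2}):=((1-p)^{2},2p(1-p),p^{2})$, a short computation shows $\alpha_{\ell}-\beta_{\ell}=\frac{(x-y)^{2}}{4}(1,-2,1)_{\ell}$, which gives the decomposition
\[
V_{m+1}(t)-U_{m+1}(t)=\sum_{\ell=0}^{2}\beta_{\ell}\bigl[V_{m}(t+\ell)-U_{m}(t+\ell)\bigr]+\frac{(x-y)^{2}}{4}\bigl[V_{m}(t)-2V_{m}(t+1)+V_{m}(t+2)\bigr].
\]
The first sum is nonnegative by the induction hypothesis together with $\beta_{\ell}\geq 0$ for $x,y\in[0,1]$. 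The second bracket equals $\sum_{k=0}^{2m}p_{2m,k}(p)\bigl[f(\tfrac{t+k}{2n})-2f(\tfrac{t+k+1}{2n})+f(\tfrac{t+k+2}{2n})\bigr]$, which is nonnegative because each bracketed second difference is $\geq 0$ by convexity of $f$ at three equally spaced points with step $1/(2n)$. This closes the induction.

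The main obstacle is the choice of the inductive invariant. A straightforward induction on $n$ applied directly to $U_{n}(0)\leq V_{n}(0)$ does not close, because the step from $n$ to $n+1$ introduces the quantities $U_{n}(t)$ and $V_{n}(t)$ at nonzero shifts $t$, which are not part of the statement at level $n$. Strengthening the hypothesis to quantify over all admissible shifts $t$ is therefore essential; once this is done, the identity $\alpha_{\ell}-\beta_{\ell}=\tfrac{(x-y)^{2}}{4}(1,-2,1)_{\ell}$ -- essentially the $n=1$ case of the theorem reappearing inside each inductive level -- drives the step and produces the nonnegative decomposition above.
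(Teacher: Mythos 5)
Your proof is correct: the recursions for $U_{m+1}$ and $V_{m+1}$ follow from the Bernstein identity $p_{m+1,i}(z)=(1-z)p_{m,i}(z)+zp_{m,i-1}(z)$, the coefficient identity $\alpha_{\ell}-\beta_{\ell}=\tfrac{(x-y)^{2}}{4}(1,-2,1)_{\ell}$ checks out, the shift ranges are consistent (for $t\in[0,2n-2m-2]$ one has $t+\ell\in[0,2n-2m]$ and all arguments of $f$ stay in $[0,1]$), and $\beta_{\ell}\ge 0$ on $[0,1]^{2}$. However, your route is genuinely different from the paper's. The paper works with generating functions: it expresses both sides as Taylor coefficients $\frac{1}{k!}\left(\frac{\partial}{\partial z}\right)^{k}[\cdot]\big|_{z=-1}$, introduces the polynomial $g(z)=z^{-2}\bigl[(1+\tfrac{x+y}{2}z)^{2n}-(1+xz)^{n}(1+yz)^{n}\bigr]$, proves $g^{(k)}(-1)\ge 0$ by factoring out $\bigl(\tfrac{x-y}{2}z\bigr)^{2}$ via the geometric-sum factorization $A^{n}-B^{n}=(A-B)\sum_{k=0}^{n-1}A^{n-1-k}B^{k}$, and then invokes a summation-by-parts identity (its Proposition~1, imported from an earlier paper) to write the difference as $\sum_{k=0}^{2n-2}(\Delta^{2}a_{k})\frac{1}{k!}g^{(k)}(-1)\ge 0$. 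What the paper's method buys is an explicit closed-form representation of the gap as a nonnegative combination of second differences of $f$, with the weights $\frac{1}{k!}g^{(k)}(-1)$ given concretely. What your method buys is self-containment and transparency: no generating functions, no external lemmas, and the probabilistic convex-ordering heuristic is visible in the induction. It is worth noting that both arguments pivot on the same elementary fact — your identity $\alpha_{\ell}-\beta_{\ell}=\tfrac{(x-y)^{2}}{4}(1,-2,1)_{\ell}$ is precisely the paper's $\bigl(1+\tfrac{x+y}{2}z\bigr)^{2}-(1+xz)(1+yz)=\bigl(\tfrac{x-y}{2}z\bigr)^{2}$ read off coefficientwise, i.e., the $n=1$ case of the theorem — and your observation that the naive induction fails without quantifying over the shifts $t$ is exactly the right diagnosis of why a strengthened invariant (or, in the paper's case, a global identity) is needed.
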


\section{An elementary proof of Theorem~\protect\ref%
{theorem-new-conj-bernstein}}

\label{Bernstein} 

The identity 
\begin{equation*}
\sum\limits_{i=0}^{n}\sum\limits_{j=0}^{n}p_{n,i}\left( x\right)
p_{n,j}\left( y\right) {f}\left( \frac{i+j}{2n}\right)
=\sum\limits_{k=0}^{2n}{f}\left( \frac{k}{2n}\right) \frac{1}{k!}\left.
\left( \frac{\partial }{\partial z}\right) ^{k}\left[ \left( 1+xz\right)
^{n}\left( 1+yz\right) ^{n}\right] \right\vert _{z=-1}
\end{equation*}%
is a direct consequence of \cite[Lemma~1]{Abel-JAT-2017}. In particular we
have 
\begin{equation*}
\sum\limits_{i=0}^{n}\sum\limits_{j=0}^{n}p_{n,i}\left( x\right)
p_{n,j}\left( x\right) {f}\left( \frac{i+j}{2n}\right)
=\sum\limits_{k=0}^{2n}{f}\left( \frac{k}{2n}\right) \frac{1}{k!}\left.
\left( \frac{\partial }{\partial z}\right) ^{k}\left[ \left( 1+xz\right)
^{2n}\right] \right\vert _{z=-1}=\left( B_{2n}f\right) \left( x\right) .
\end{equation*}%
Inserting 
\begin{equation*}
\left( B_{2n}f\right) \left( \frac{x+y}{2}\right) =\sum\limits_{k=0}^{2n}{f}%
\left( \frac{k}{2n}\right) \frac{1}{k!}\left. \left( \frac{\partial }{%
\partial z}\right) ^{k}\left( 1+\frac{x+y}{2}z\right) ^{2n}\right\vert
_{z=-1}
\end{equation*}%
we obtain 
\begin{eqnarray*}
&&\left( B_{2n}f\right) \left( \frac{x+y}{2}\right)
-\sum\limits_{i=0}^{n}\sum\limits_{j=0}^{n}p_{n,i}\left( x\right)
p_{n,j}\left( y\right) {f}\left( \frac{i+j}{2n}\right) \\
&=&\sum\limits_{i=0}^{n}\sum\limits_{j=0}^{n}\left[ p_{n,i}\left( \frac{x+y}{%
2}\right) p_{n,j}\left( \frac{x+y}{2}\right) -p_{n,i}\left( x\right)
p_{n,j}\left( y\right) \right] {f}\left( \frac{i+j}{2n}\right) \\
&=&\sum\limits_{k=0}^{2n}{f}\left( \frac{k}{2n}\right) \frac{1}{k!}\left.
\left( \frac{\partial }{\partial z}\right) ^{k}\left[ \left( 1+\frac{x+y}{2}%
z\right) ^{2n}-\left( 1+xz\right) ^{n}\left( 1+yz\right) ^{n}\right]
\right\vert _{z=-1}.
\end{eqnarray*}%
For fixed $n\in \mathbb{N}$ and $x,y\in \left[ 0,1\right] $, we define 
\begin{equation*}
g\left( z\right) \equiv g_{n}\left( z;x,y\right) =z^{-2}\left( \left( 1+%
\frac{x+y}{2}z\right) ^{2n}-\left( 1+xz\right) ^{n}\left( 1+yz\right)
^{n}\right) .
\end{equation*}%
Note that $g$ is a polynomial in $z$ of degree at most $2n-2$.

\begin{lemma}
Fix $x,y\in \left[ 0,1\right] $. Then, the function $g$ satisfies $g^{\left(
k\right) }\left( -1\right) \geq 0$, for $k=0,1,\ldots ,2n-2$.
\end{lemma}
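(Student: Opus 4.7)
The plan is to rewrite $g(z)$ in a form that visibly has nonnegative Taylor coefficients around $z=-1$; this is equivalent to the lemma, since $g^{(k)}(-1)\ge 0$ for $k=0,\ldots,2n-2$ is exactly the assertion that the polynomial $g(t-1)$, in the variable $t$, has nonnegative coefficients.

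The key algebraic observation is the identity $\left(\frac{a+b}{2}\right)^{2}-ab=\left(\frac{a-b}{2}\right)^{2}$, applied with $a=1+xz$ and $b=1+yz$. Here $\frac{a+b}{2}=1+\frac{x+y}{2}z$ and $a-b=(x-y)z$. Setting $A=\left(\frac{a+b}{2}\right)^{2}$ and $B=ab$, the bracketed expression in the definition of $g$ is $A^{n}-B^{n}$, which factors as
\[
A^{n}-B^{n}=(A-B)\sum_{j=0}^{n-1}A^{n-1-j}B^{j}=\frac{(x-y)^{2}z^{2}}{4}\sum_{j=0}^{n-1}\left(1+\tfrac{x+y}{2}z\right)^{2(n-1-j)}\bigl[(1+xz)(1+yz)\bigr]^{j}.
\]
The factor $z^{2}$ cancels the $z^{-2}$ in the definition of $g$, leaving
\[
g(z)=\frac{(x-y)^{2}}{4}\sum_{j=0}^{n-1}\left(1+\tfrac{x+y}{2}z\right)^{2(n-1-j)}\bigl[(1+xz)(1+yz)\bigr]^{j}.
\]

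To finish, I would substitute $z=t-1$. The three linear factors become $1-x+xt$, $1-y+yt$, and $1-\frac{x+y}{2}+\frac{x+y}{2}t$; for $x,y\in[0,1]$ each has nonnegative coefficients in $t$. Since positive integer powers, products, and nonnegatively weighted sums of polynomials with nonnegative coefficients preserve that property, $g(t-1)$ has nonnegative coefficients, and reading off Taylor coefficients at $t=0$ yields $g^{(k)}(-1)\ge 0$ for every $k$.

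I do not anticipate a genuine obstacle once the factorization is in hand: the only creative step is the identity $A-B=\bigl(\frac{a-b}{2}\bigr)^{2}=\frac{(x-y)^{2}z^{2}}{4}$, which simultaneously supplies the $z^{2}$ needed to absorb the $z^{-2}$ in $g$ and turns the remainder into a product of factors that are manifestly "$z=t-1$ nonnegative." Everything else is bookkeeping.
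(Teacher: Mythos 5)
Your proof is correct and follows essentially the same route as the paper: the identity $\left(1+\frac{x+y}{2}z\right)^{2}-(1+xz)(1+yz)=\left(\frac{x-y}{2}z\right)^{2}$ combined with the factorization of $A^{n}-B^{n}$ yields exactly the representation of $g$ used in the paper's proof. Your substitution $z=t-1$ merely spells out the step the paper calls ``immediate,'' so there is nothing to add.
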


\begin{proof}
Noting that 
\begin{equation*}
\left( 1+\frac{x+y}{2}z\right) ^{2}-\left( 1+xz\right) \left( 1+yz\right)
=\left( \frac{x-y}{2}z\right) ^{2}
\end{equation*}%
we obtain 
\begin{eqnarray*}
g\left( z\right) &=&\frac{1}{4}\left( x-y\right) ^{2}\frac{\left( 1+\frac{x+y%
}{2}z\right) ^{2n}-\left( 1+xz\right) ^{n}\left( 1+yz\right) ^{n}}{\left( 1+%
\frac{x+y}{2}z\right) ^{2}-\left( 1+xz\right) \left( 1+yz\right) } \\
&=&\frac{1}{4}\left( x-y\right) ^{2}\sum\limits_{k=0}^{n-1}\left( 1+\frac{x+y%
}{2}z\right) ^{2\left( n-1-k\right) }\left( 1+xz\right) ^{k}\left(
1+yz\right) ^{k}
\end{eqnarray*}%
it is immediate that $g^{\left( k\right) }\left( -1\right) \geq 0$, for $%
k=0,1,\ldots ,2n-2$, if $x,y\in \left[ 0,1\right] $.
\end{proof}

The key result is the next proposition. The proof follows the lines of \cite[%
Prop.~1]{Abel-JAT-2017}.

\begin{proposition}
\label{prop-bernstein}Fix $x,y\in \left[ 0,1\right] $. Then, for any real
numbers $a_{0},\ldots ,a_{2n}$, the identity 
\begin{equation}
\sum\limits_{i=0}^{n}\sum\limits_{j=0}^{n}\left[ p_{n,i}\left( \frac{x+y}{2}%
\right) p_{n,j}\left( \frac{x+y}{2}\right) -p_{n,i}\left( x\right)
p_{n,j}\left( y\right) \right] \cdot a_{i+j}=\sum\limits_{k=0}^{2n-2}\left(
\Delta ^{2}a_{k}\right) \frac{1}{k!}g^{\left( k\right) }\left( -1\right)
\label{bernstein-identity}
\end{equation}%
is valid.
\end{proposition}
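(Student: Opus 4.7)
The plan is to reduce the identity to a manipulation involving derivatives at $z=-1$, using the generating-function identity recalled at the start of Section~2 and the factorization $g(z)=z^{-2}\bigl[(1+\tfrac{x+y}{2}z)^{2n}-(1+xz)^n(1+yz)^n\bigr]$, which gives
\[
\Bigl(1+\tfrac{x+y}{2}z\Bigr)^{2n}-(1+xz)^n(1+yz)^n = z^{2}g(z).
\]
Applied with an arbitrary sequence $(a_k)$ in place of $f(k/(2n))$, the opening identity of Section~2 yields
\[
\sum_{i=0}^{n}\sum_{j=0}^{n}\Bigl[p_{n,i}\bigl(\tfrac{x+y}{2}\bigr)p_{n,j}\bigl(\tfrac{x+y}{2}\bigr)-p_{n,i}(x)p_{n,j}(y)\Bigr]a_{i+j}
=\sum_{k=0}^{2n} a_{k}\,\frac{1}{k!}\,\frac{d^{k}}{dz^{k}}\bigl[z^{2}g(z)\bigr]\bigg|_{z=-1}.
\]

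The next step is to apply Leibniz's rule to $z^{2}g(z)$; since only the derivatives of $z^{2}$ of orders $0,1,2$ are nonzero, one obtains the three-term expression
\[
\frac{d^{k}}{dz^{k}}\bigl[z^{2}g(z)\bigr]\bigg|_{z=-1}
= g^{(k)}(-1)-2k\,g^{(k-1)}(-1)+k(k-1)\,g^{(k-2)}(-1),
\]
with the convention that negative-order derivatives vanish. Substituting back and splitting into three sums gives
\[
\sum_{k=0}^{2n}\frac{a_{k}}{k!}g^{(k)}(-1)
-2\sum_{k=1}^{2n}\frac{a_{k}}{(k-1)!}g^{(k-1)}(-1)
+\sum_{k=2}^{2n}\frac{a_{k}}{(k-2)!}g^{(k-2)}(-1).
\]

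Then I would reindex by $\ell=k$, $\ell=k-1$, $\ell=k-2$ respectively. Because $g$ is a polynomial of degree at most $2n-2$, all terms with $\ell>2n-2$ drop out, and the three sums line up to give a common factor $g^{(\ell)}(-1)/\ell!$ for $\ell=0,\dots,2n-2$. Collecting coefficients yields
\[
\sum_{\ell=0}^{2n-2}\bigl(a_{\ell+2}-2a_{\ell+1}+a_{\ell}\bigr)\frac{1}{\ell!}g^{(\ell)}(-1)
=\sum_{\ell=0}^{2n-2}\bigl(\Delta^{2}a_{\ell}\bigr)\frac{1}{\ell!}g^{(\ell)}(-1),
\]
which is the claimed identity. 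The only step requiring any care is the truncation in the reindexing: one must invoke $\deg g\le 2n-2$ to see that the first and second sums, a priori running to $\ell=2n$ and $\ell=2n-1$, actually stop at $\ell=2n-2$, so that the coefficient of $g^{(\ell)}(-1)/\ell!$ on each remaining term is exactly $a_{\ell}-2a_{\ell+1}+a_{\ell+2}$. The rest is routine Leibniz bookkeeping.
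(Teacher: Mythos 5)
Your proof is correct: the reduction via the generating-function identity, the Leibniz expansion of $\frac{d^{k}}{dz^{k}}\left[ z^{2}g\left( z\right) \right]$ at $z=-1$, and the reindexing (using $\deg g\leq 2n-2$ to truncate) are all carried out accurately and assemble into exactly the identity $\left( \ref{bernstein-identity}\right) $. This is essentially the argument the paper intends, since it gives no proof of Proposition~\ref{prop-bernstein} beyond citing that it "follows the lines of" \cite[Prop.~1]{Abel-JAT-2017}, whose method is precisely this Taylor-coefficient/Leibniz bookkeeping.
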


Here $\Delta $ denotes the forward difference $\Delta a_{k}:=a_{k+1}-a_{k}$
such that $\Delta ^{2}a_{k}=a_{k+2}-2a_{k+1}+a_{k}$.

Because $g$ is a polynomial in $z$ of degree at most $2n-2$, it is obvious
that $g^{\left( 2n-1\right) }\left( -1\right) =g^{\left( 2n\right) }\left(
-1\right) =0$.

\begin{proof}[Proof of Theorem~\protect\ref{theorem-new-conj-bernstein}]
For $k=0,1,\ldots ,2n-2$, we put 
\begin{equation*}
a_{k}={f}\left( \frac{k}{2n}\right) .
\end{equation*}%
If $f\in C\left[ 0,1\right] $ is a convex function it follows that $\Delta
^{2}a_{k}\geq 0$, for $k=0,1,\ldots ,2n-2$. Therefore, application of
Proposition~\ref{prop-bernstein} proves Theorem~\ref%
{theorem-new-conj-bernstein}.
\end{proof}

\bigskip


\begin{thebibliography}{99}
\bibitem{Abel-JAT-2017} U.~Abel, \newblock An inequality involving Bernstein
polynomials and convex functions, \newblock J. Approx. Theory \textbf{222}
(2017), 1--7. \newline
http://dx.doi.org/10.1016/j.jat.2017.05.006

\bibitem{Altomare-ea-book-2014} F. Altomare, M. Cappelletti Montano, V.
Leonessa and I. Ra\c{s}a, Markov Operators, Positive Semigroups and
Approximation Processes, de Gruyter Studies in Mathematics 61, Walter de
Gruyter, Berlin/Boston, 2014.

\bibitem{altomare-ea-elliptic-} F. Altomare, M. Cappelletti Montano, V.
Leonessa and I. Ra\c{s}a, Elliptic differential operators and positive
semigroups associated with generalized Kantorovich operators, to appear.

\bibitem{Altomare-ea-Kantorovich-} F. Altomare, M. Cappelletti Montano, V.
Leonessa and I. Ra\c{s}a, A generalization of Kantorovich operators for
convex compact subsets, to appear in Banach J. Math. Anal.

\bibitem{Komisarski-letter} A. Komisarski, T. Rajba, \newblock Letter to
Ioan Ra\c{s}a.

\bibitem{Komisarski-2017} A. Komisarski, T. Rajba, \newblock Muirhead
inequality for convex orders and a problem of I. Ra\c{s}a on Bernstein
polynomials, arXiv:1703.10634v1 30 Mar 2017.

\bibitem{Lorentz-book-1953} G.~G. Lorentz, \newblock Bernstein polynomials
(Mathematical Expositions, No. 8.), \newblock University of Toronto Press,
Toronto 1953.

\bibitem{Mrowiec-ea-2017} Jacek Mrowiec, Teresa Rajba and Szymon W\k{a}%
sowicz, \newblock A solution to the problem of Ra\c{s}a connected with
Bernstein polynomials, \newblock Journal of Mathematical Analysis and
Applications \textbf{446}, 864--878 (2017). \newline
\newblock http://doi.org/10.1016/j.jmaa.2016.09.009

\bibitem{Rasa-2014} Ioan Ra\c{s}a, \newblock2. Problem, p. 164. \newblock %
In:\ Report of Meeting Conference on Ulam's Type Stability, Rytro, Poland,
June 2--6, 2014, \newblock Ann. Univ. Paedagog. Crac. Stud. Math. 13 (2014),
139--169. \newblock\newline
DOI: 10.2478/aupcsm-2014-0011

\bibitem{Rasa-2017-workshop} I. Rasa, Bernstein polynomials and convexity:
recent probabilistic and analytic proofs, The Workshop "Numerical Analysis,
Approximation and Modeling", T. Popoviciu Institute of Numerical Analysis,
Cluj-Napoca, June 14, 2017,
http://ictp.acad.ro/zilele-academice-clujene-2017/
\end{thebibliography}

\thispagestyle{empty}

\end{document}